\newcommand{\rrvert}{\vert}
\newcommand{\rrVert}{\Vert}
\newcommand{\llvert}{\vert}
\newcommand{\llVert}{\Vert}
\newcommand{\N}{\mathbb{N}}
\newcommand{\R}{\mathbb{R}}
\renewcommand{\P}{\mathbb{P}}
\newcommand{\E}{\mathbb{E}}
\newcommand{\diam}{\operatorname{diam}}
\newcommand{\argmin}{\operatorname{\arg\min}\limits}
\newcommand{\eqref}[1]{(\ref{#1})}
\newtheorem{proposition}{Proposition}
\newtheorem{corollary}{Corollary}
\newtheorem{theorem}{Theorem}
\begin{document}
\begin{frontmatter}

\title{Adaptive-treed bandits}
\runtitle{Adaptive-treed bandits}

\begin{aug}
\author[A]{\inits{A.D.}\fnms{Adam D.}~\snm{Bull}\corref{}\ead[label=e1]{a.bull@statslab.cam.ac.uk}}
\address[A]{Statistical Laboratory, Wilberforce Road, Cambridge CB3 0WB,
UK.\\ \printead{e1}}
\end{aug}

\received{\smonth{2} \syear{2013}}
\revised{\smonth{2} \syear{2014}}

%
\begin{abstract}
We describe a novel algorithm for noisy global optimisation and
continuum-armed bandits, with good convergence properties over any
continuous reward function having finitely many polynomial maxima.
Over such functions, our algorithm achieves square-root regret in
bandits, and inverse-square-root error in optimisation, without prior
information.

Our algorithm works by reducing these problems to tree-armed bandits,
and we also provide new results in this setting. We show it is
possible to adaptively combine multiple trees so as to minimise the
regret, and also give near-matching lower bounds on the regret in
terms of the zooming dimension.
\end{abstract}

%
\begin{keyword}
\kwd{bandits on taxonomies}
\kwd{continuum-armed bandits}
\kwd{noisy global optimisation}
\kwd{tree-armed bandits}
\kwd{zooming dimension}
\end{keyword}
\end{frontmatter}

\section{Introduction}
\label{sec:introduction}

In \emph{noisy global optimisation}, we wish to maximise a continuous
function $\mu\dvtx X \to[0,1]$ over a space $X = [0,1]^p$, given only
noisy observations of the function values $\mu(x)$. This problem
arises in a wide variety of engineering applications, and has been
considered by many authors (for example, see references
in \cite{parsopoulos_recent_2002,huang_sequential_2006,frazier_knowledge-gradient_2009,srinivas_gaussian_2010}).

To be precise, we suppose that at each time $t$, we choose a design
point $x_t \in X$, and then observe a random variable $Y_t \in
[0,1]$ with mean $\mu(x_t)$, as in Figure~\ref{fig:cab}. After $T$
steps, our goal is to choose an estimated maximum $\hat x_T$ of
$\mu$, so as to minimise the \emph{simple regret},
%
\begin{equation}
\label{eq:s-def} S_T = \mu^* - \mu(\hat x_T),
\end{equation}
where $\mu^* = \sup_{x \in X} \mu(x)$.

%
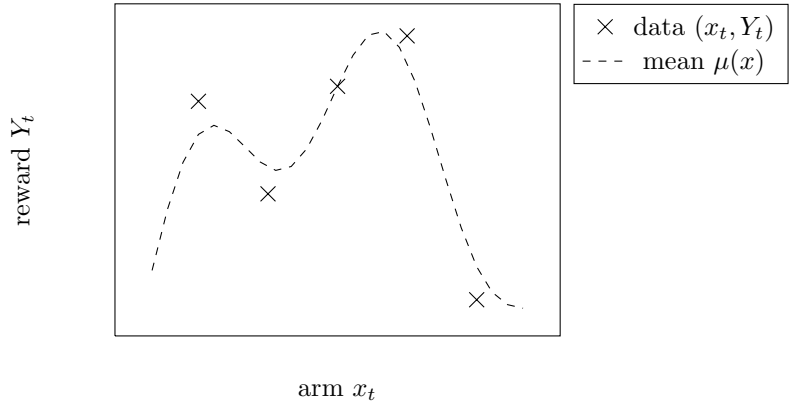
\begin{figure}
\begin{center}
\begin{tikzpicture}
\begin{axis}[ticks=none,height=6cm,width=7.5cm,
xlabel=arm $x_t$,ylabel=reward $Y_t$,
legend style={legend pos=outer north east}]
\addplot[domain=-0.75:0.75,only marks,mark=x,mark
size=4pt,samples=5] {-x^2+sin(6*deg(x))/2};
\addlegendentry{data $(x_t,Y_t)$}
\addplot[domain=-1:1,dashed] {-x^2+sin(6*deg(x))/3};
\addlegendentry{mean $\mu(x)$}
\end{axis}
\end{tikzpicture}
\end{center}
\caption{Noisy global optimisation: we choose design points $x_t$,
observe data $Y_t$ with mean $\mu(x_t)$, and wish to maximise
$\mu$.}
\label{fig:cab}
\end{figure}

We would like to find a solution to this problem which achieves good
rates of convergence, and can also be expected to provide good
practical performance. We note that good convergence of $S_T$ does
not necessarily ensure good practical performance: for example, if
$\mu$ is Lipschitz on $[0,1]$, the optimal rate of $\tilde
{\mathrm{O}}(T^{-1/3})$ can be achieved by a fixed choice of design points
$x_t$; nonetheless, we can expect better practical performance from
a choice which varies with the observations $Y_t$. (The result for a
fixed design is given by \cite{muller_kernel_1985}; the
corresponding lower bound can be proved similarly to our
Theorem~\ref{thm:lower-bound}.)

An alternative is to instead minimise the \emph{cumulative regret},
%
\begin{equation}
\label{eq:r-def} R_T = \sum_{t=1}^T
\bigl(\mu^* - \mu(x_t) \bigr).
\end{equation}
If an algorithm controls the cumulative regret at rate $Tr_T$, it
can also control the simple regret at rate $r_T$
\citep{bubeck_pure_2009}; bounding the cumulative regret is thus a
stronger result. The advantage in bounding $R_T$ is that it also
ensures our solution will place most of its design points in regions
where $\mu$ is near-optimal; that few observations will be wasted.

We would thus expect algorithms which control the cumulative regret to
offer improved practical performance. For example, in our Lipschitz
model above, a fixed choice of design points must suffer $\Omega(T)$
cumulative regret; an algorithm which concentrates its design points
in optimal regions of $\mu$ can simultaneously achieve the optimal
rates of $\tilde {\mathrm{O}}(T^{2/3})$ cumulative regret, and $\tilde
{\mathrm{O}}(T^{-1/3})$ simple regret \citep{kleinberg_nearly_2005}.

In the following, we will therefore seek an algorithm for choosing the
design points $x_t$ which minimises the cumulative regret. Problems
of this kind are known as \emph{multi-armed bandits}; they can be
thought of as attempting to optimally play an unknown slot machine
(or `bandit') with multiple arms.

The field of multi-armed bandits has a long history in the literature,
and comprises many difficult problems even when the set $X$ to
optimise over is small and finite (see references
in \cite{bubeck_regret_2012}). However, recent work has also focused on
the specific problem of \emph{continuum-armed bandits}, where
$X=[0,1]^p$, and we make some smoothness assumption on the reward
$\mu$; we discuss this work in more detail below.

Many solutions to this problem involve placing a tree structure over
$[0,1]^p$, for example as in Figure~\ref{fig:cab-tree}. The problem can
thus also be thought of as lying within the more general field of \emph
{tree-armed bandits}, where the optimisation occurs over any set with
a known tree structure. Such problems are of interest not only in
noisy optimisation, but also in areas such as artificial intelligence
and online services (see references in \cite{slivkins_multi-armed_2011,yu_unimodal_2011,gelly_grand_2012}).

%
\begin{figure}
\begin{center}
\begin{tikzpicture}[level 1/.style={sibling distance=6cm},
level 2/.style={sibling distance=3cm},
level 3/.style={sibling distance=1.5cm}]
\node{$[0, 1]$}
child {node {$[0, \frac{1}2)$}
child {node {$[0, \frac{1}4)$}
child {node {$\dots$}}
child {node {$\dots$}}}
child {node {$[\frac{1}4, \frac{1}2)$}
child {node {$\dots$}}
child {node {$\dots$}}}}
child {node {$[\frac{1}2, 1]$}
child {node {$[\frac{1}2, \frac{3}4)$}
child {node {$\dots$}}
child {node {$\dots$}}}
child {node {$[\frac{3}4, 1]$}
child {node {$\dots$}}
child {node {$\dots$}}}};
\end{tikzpicture}
\end{center}
\caption{The dyadic tree over $[0,1]$.}
\label{fig:cab-tree}
\end{figure}
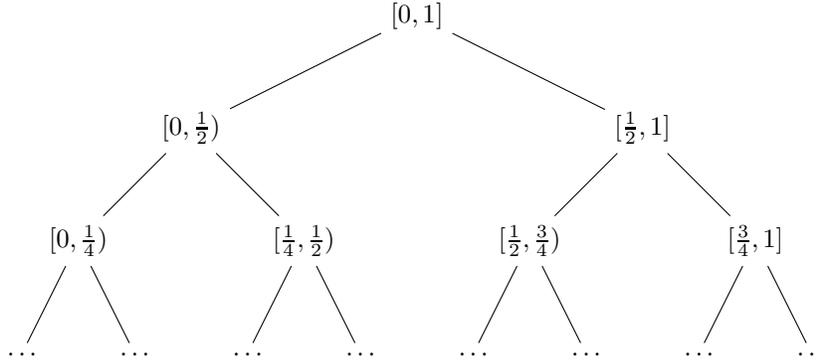

In the following paper, we will describe a new algorithm for noisy
global optimisation, which obtains good cumulative regret under fewer
assumptions than previous results in the literature. As a consequence,
we will also prove new results for continuum-armed and tree-armed
bandits, which may be of wider interest.

We proceed by discussing previous work in more detail, before then
outlining our contributions. The continuum-armed bandit problem was
devised by Agrawal \cite{agrawal_continuum-armed_1995}, and for Lipschitz
reward functions $\mu$, nearly tight bounds on the cumulative regret
were first proved by Kleinberg \cite{kleinberg_nearly_2005}.
Kleinberg applied the UCB1 strategy of
Auer, Cesa-Bianchi and Fischer \cite{auer_finite-time_2002} to a
simple fixed discretisation of the
arm space $[0, 1]$, achieving $\tilde {\mathrm{O}}(T^{2/3})$ regret.

Independently, Cope \cite{cope_regret_2009}
found it was possible to
achieve $\mathrm{O}(\sqrt{T})$ regret given stronger assumptions on $\mu$:
Cope showed this for the stochastic
approximation algorithm of Kiefer and Wolfowitz \cite{kiefer_stochastic_1952}, applied to
unimodal reward functions $\mu$. Auer, Ortner and Szepesv\'ari
\cite{auer_improved_2007} obtained
similar bounds by extending the method of
Kleinberg \cite{kleinberg_nearly_2005}:
Auer, Ortner and Szepesv\'ari obtained $\tilde {\mathrm{O}}(\sqrt{T})$ regret
over any reward function
$\mu$ with, say, finitely many quadratic global maxima.

Kleinberg, Slivkins and Upfal \cite{kleinberg_multi-armed_2008} described a new `zooming'
algorithm, which used an adaptive discretisation of the arm space
$X$, and could be applied whenever $X$ was a metric space. For
Lipschitz $\mu$, Kleinberg, Slivkins and Upfal obtained
regret like $\tilde {\mathrm{O}}(T^{1-1/(\beta+ 2)})$, for a parameter $\beta
\ge0$ they called the \emph{zooming dimension}, measuring the
difficulty of the bandit problem.

Bubeck \textit{et al.} \cite{bubeck_x-armed_2011} described
a related algorithm, HOO, which
could be applied whenever $X$ had a known tree
structure. Bubeck \textit{et al.} again obtained $\tilde
{\mathrm{O}}(\sqrt{T})$ regret over $\mu$ with, say, finitely many quadratic
global maxima, while also covering more general arm spaces and reward
functions.

While the above results are significant, a shared weakness is that
they all\vspace*{1.5pt} require some assumptions on the shape of the reward function
$\mu$. The strongest results, providing $\tilde {\mathrm{O}}(\sqrt{T})$
regret, require us to assume say that $\mu$ has quadratic global
maxima, as in the function
%
\begin{equation}
\label{eq:mu-quadratic} \mu(x) = -x^2.
\end{equation}
However, if we make such an assumption, and then try to optimise a
function with maxima of a different power, such as
%
\begin{equation}
\label{eq:mu-quartic} \mu(x) = -x^4,
\end{equation}
or of mixed powers, such as
%
\begin{equation}
\label{eq:mu-mixed} \mu(x, y) = -x^2 - y^4,
\end{equation}
we will achieve worse rates of regret.

Several authors have tried to improve upon this, constructing bandit
algorithms which adapt to the shape of the reward function. Under
further regularity assumptions, Bubeck, Stoltz and Yu
\cite{bubeck_lipschitz_2011} extended
the algorithm of Kleinberg \cite{kleinberg_nearly_2005} to adapt to the
Lipschitz constant. In a noiseless problem, for the simple regret,
Munos \cite{munos_optimistic_2011}
described an algorithm based on HOO,
which adapts to a wide range of reward functions $\mu$.

In this paper, we will build upon an approach described by
Slivkins \cite{slivkins_multi-armed_2011} for tree-armed
bandits. Slivkins described an
algorithm, TaxonomyZoom, which can adapt to a wide range of reward
functions $\mu$, if the arm space $X$ is given by a finite tree.

Our first contribution is to extend the TaxonomyZoom algorithm to
apply in noisy global optimisation and continuum-armed bandits. We
modify the algorithm to apply directly to infinite arm spaces such as
$[0,1]^p$ (rather than using a fixed discretisation, which could harm
convergence). We also give an explicit estimated maximum $\hat x_T$
(noting that while we could derive a naive choice as in
\cite{bubeck_pure_2009}, ours will be more reliable in practice),
and fix a gap in the proofs of Slivkins.

Our second, more significant contribution is to give a
construction of TaxonomyZoom which can adaptively vary the tree it
optimises over. In previous work on bandits, optimisation has
proceeded either over a fixed partition of the space $X$, or over
partitions selected from a fixed tree. However, in order to get good
convergence rates over functions such as \eqref{eq:mu-mixed}, we will
need to use trees which adapt to the data.

When $X = [0,1]^p$, our algorithm constructs a tree by adaptively
partitioning subsets of $X$ along the axes; we will show that this
procedure achieves optimal convergence rates for a wide variety of
reward functions $\mu$. While the motivation for our algorithm comes
from continuum-armed bandits, our results will apply more generally in
the tree-armed setting, where the tree can be constructed adaptively
from any suitable collection of sub-trees.

Our third contribution is a lower bound on the convergence rate in
tree-armed bandits, given in terms of the zooming dimension $\beta$.
While this result forms part of our lower bound in the continuum-armed
setting, such results have also been missing from previous literature
on tree-armed bandits, and may thus be of wider interest.

Our final contribution is in the interpretation of our results in
noisy global optimisation and continuum-armed bandits. To apply our
algorithm in these settings, we will need to assume the reward
function $\mu$ is sufficiently well-behaved; essentially, that it is
continuous with finitely many polynomial maxima.

The precise condition we will require is that $\mu$ be what we call
\emph{zooming continuous}. This new condition generalises assumptions
previously made for example in Auer, Ortner and Szepesv\'ari \cite{auer_improved_2007} or
Bubeck \textit{et al.} \cite{bubeck_x-armed_2011}, and
gives a concise description of the
reward functions $\mu$ over which we can achieve good cumulative
regret.

When the reward function $\mu$ is zooming continuous, we will show
that our algorithm obtains $\tilde {\mathrm{O}}(\sqrt{T})$ cumulative regret,
and $\tilde {\mathrm{O}}(1/\sqrt{T})$ simple regret, with computation time
$\tilde {\mathrm{O}}(T)$. While the constants in these rates will depend on
$\mu$, our algorithm will attain said rates without prior knowledge
of the rewards.

We note that concurrently with this work,
Valko, Carpentier and Munos \cite{valko_stochastic_2013} have
described another adaptive
algorithm which can be applied to continuum-armed bandits, based on
the approach of Munos \cite{munos_optimistic_2011}. While their results
bound only the simple regret, and do not adapt to reward functions
like \eqref{eq:mu-mixed}, their approach may be easier to generalise,
and their results are complementary to ours.

In Section~\ref{sec:cab}, we will discuss the continuum-armed bandit
problem, and describe the class of zooming continuous reward
functions. In Section~\ref{sec:tab}, we will then describe our algorithm
for tree-armed bandits, and state our results. Finally, in the
supplemental article \cite{bull_supplement_2014} we will give proofs.

\section{Continuum-armed bandits}
\label{sec:cab}

In this section, we describe our results on continuum-armed bandits;
we begin with a precise definition of the multi-armed bandit
problem. Suppose we have a measurable arm space $(X, \mathcal E)$,
and for each $x \in X$, some unknown distribution $P(x)$ over
$[0, 1]$, with mean $\mu(x)$. At time $t$, we are allowed to
choose an arm $x_t \in X$, and then receive a reward $Y_t$ with
distribution $P(x_t)$.

Formally, we take a probability space $(\Omega, \mathcal F, \P)$,
with random variables $Y_t \in[0,1]$ and $Z_t \in Z$, $t \in
\N$, for a measurable space $(Z, \mathcal E')$; the variables
$Z_t$ represent a source of randomisation. At time $t$, we require
that $Z_t$ is distributed independently of past events, $x_t$ is
an $\mathcal E$-measurable function of $Y_1, \dots, Y_{t-1}, Z_1,
\dots, Z_t$, and $Y_t$ has distribution $P(x_t)$, conditionally
on past events and $Z_t$. A \emph{strategy} for the multi-armed
bandit problem is given by the functions $x_t$, and the
distributions of the variables $Z_t$.

If our goal is to optimise $\mu$, we can additionally return an
estimated maximum $\hat x_T$, which we require to be an $\mathcal
E$-measurable function of $Y_1, \dots, Y_T$, $Z_1, \dots, Z_T$,
and an independent randomisation variable $\hat Z_T \in Z$. Our
strategy then also includes the function $\hat x_T$, and the
distribution of the variable $\hat Z_T$.

Finally, we define the cumulative regret $R_T$ as in
\eqref{eq:r-def}, and simple regret $S_T$ as in \eqref{eq:s-def}.
In the following, we will first consider the arm space $X =
[0,1]^p$; our goal will then be to find a strategy which makes the
regrets $R_T$ and $S_T$ as small as possible, for a wide variety
of reward functions~$\mu$.

The functions $\mu$ we consider will satisfy a new condition we call
\emph{zooming continuity}. Essentially, we will require that $\mu$
remains smooth as we `zoom in' on its maxima; Figure~\ref{fig:zcdef}
illustrates the concept.

%
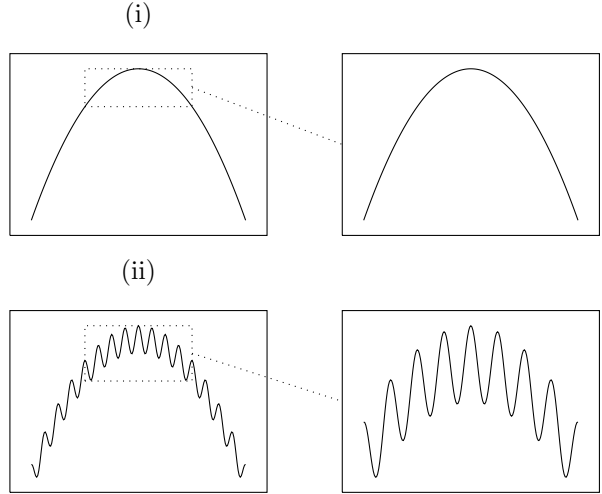
\begin{figure}
\begin{center}
\begin{tikzpicture}
\begin{groupplot}[group style={group size=2 by 2},
height=4cm,width=5cm,ticks=none,samples=201]
\nextgroupplot[title={(i)}]
\addplot[domain=-1:1] {-x^2};
\draw[dotted] (axis cs:-0.5,0) rectangle (axis cs:0.5,-0.25);
\coordinate(l1) at (axis cs:0.5,-0.125);
\nextgroupplot
\addplot[domain=-0.5:0.5] {-x^2};
\nextgroupplot[title={(ii)}]
\addplot[domain=-1:1]
{-x^2+0.1*cos(50*deg(x))};
\draw[dotted] (axis cs:-0.5,0.1) rectangle (axis cs:0.5,-0.3);
\coordinate(l2) at (axis cs:0.5,-0.1);
\nextgroupplot
\addplot[domain=-0.5:0.5]
{-x^2+0.1*cos(50*deg(x))};
\end{groupplot}
\draw[dotted] (l1) -- (group c2r1.west);
\draw[dotted] (l2) -- (group c2r2.west);
\end{tikzpicture}
\end{center}
\caption{Function (i) remains smooth as we zoom in on its maxima; (ii)
does not.}
\label{fig:zcdef}
\end{figure}

As this zooming operation is a common part of algorithms for
continuum-armed bandits, it is natural to require that when doing so,
$\mu$ remains smooth. As such behaviour is neither necessary nor
sufficient for membership in standard smoothness classes, we will thus
require a new definition.

For any set $U \subseteq\R^p$, define its diameter along axis
$i$,
\[
\diam_i(U) = \sup \bigl\{\llvert x_i-y_i
\rrvert : x,y \in U \bigr\},
\]
and its
overall diameter,
\[
\diam(U) = \sup \bigl\{\llVert x-y\rrVert : x,y \in U \bigr\}.
\]
Given also $x \in\R^d$, define its size, relative to $U$, to be
\[
\llVert x\rrVert _U^2 = \sum
_{i=1}^p \biggl(\frac{\llvert  x_i\rrvert }{\diam_i(U)}
\biggr)^2.
\]
We then have the following definition.

\begin{definition}
\label{def:zooming}
Let $X \subset\R^p$ be a compact product of intervals. The
function $f \dvtx  X \to\R$ is \emph{zooming continuous} if:
\begin{enumerate}[(ii)]
\item\label{def:zooming:maxima} $f$ is continuous, with finitely
many global maxima; and
\item\label{def:zooming:local} for any global maximum $x^*$ of
$f$, and any neighbourhood $U$ in $X$ of $x^*$,
%
\begin{equation}
\label{eq:zooming} \sup_{x^*,
U : \diam(U) \le\varepsilon}\frac{\sup_{x, y \in U :
\llVert  x-y\rrVert _U \le\varepsilon}\llvert  f(x) - f(y)\rrvert }{\sup_{z \in U}
\llvert  f(x^*) - f(z)\rrvert } \to0
\end{equation}
as $\varepsilon\to0$.
\end{enumerate}
\end{definition}

We thus require that for any small neighbourhood $U$ of a global
maximum $x^*$, and any points $x,y \in U$ which are close relative
to the size of $U$, the function $f$ does not vary much between
$x$ and $y$, relative to its range over $U$. In other words,
after `zooming in' to $f$ on the set $U$, $f$ remains smooth.

We can show that many functions $\mu$ of interest are zooming
continuous. Essentially, our definition includes any continuous
function $\mu$ with finitely many maxima, each of which behaves like
a suitable polynomial.

\begin{proposition}
\label{lem:zcnd}
Let $X \subset\R^p$ be a compact product of intervals, and $f\dvtx X
\to\R$ be continuous, with finitely many global maxima $x_1^*,
\dots, x_L^*$. For each maximum $x_l^*$, let $f$ satisfy one of
the following as $x \to x_l^*$.
\begin{enumerate}[(ii)]
\item$x_l^*$ is an elliptical maximum,
\[
f(x) = f \bigl(x_l^* \bigr) - \bigl\llVert A_l
\bigl(x-x_l^* \bigr) \bigr\rrVert ^{\alpha_l} \bigl(1 + \mathrm{o}(1)
\bigr),
\]
for
a positive-definite matrix $A_l \in\R^{p\times p}$, and $\alpha_l > 0$.
\item$x_l^*$ is a separable maximum,
\[
f(x) = f \bigl(x_l^* \bigr) - \Biggl(\sum
_{i=1}^p c_{l,i} \bigl\llvert
x_i-x^*_{l,i} \bigr\rrvert ^{\alpha_{l,i}} \Biggr) \bigl(1
+ \mathrm{o}(1) \bigr),
\]
for
$c_{l,i}, \alpha_{l,i} > 0$.
\end{enumerate}
Then $f$ is zooming continuous.
\end{proposition}

The case of elliptical maxima includes all maxima where the function
is locally a quadratic, since we may set $\alpha_l = 2$, and let
$A_l$ be the square root of the Hessian matrix. Alternatively, the
case of separable maxima allows us to model functions which depend
more strongly on some coordinates $x_i$ than others.

We can thus check that zooming continuity includes functions with
maxima like \eqref{eq:mu-quadratic}--\eqref{eq:mu-mixed}, as well as
other combinations of powers. While the conditions of
Lemma~\ref{lem:zcnd} thereby cover our motivating examples, in the
following we will prefer to work directly with the more general and
concise Definition~\ref{def:zooming}.

When the reward function has such behaviour, the following result
shows we can achieve good convergence rates for both the simple and
cumulative regret. This result comes as a corollary to theorems in
Section~\ref{sec:tab}, where we describe a strategy for tree-armed bandits
achieving such rates, and also provide a near-matching lower bound.

\begin{corollary}
\label{cor:zoom-rate}
Let $\varepsilon\in(0, 1)$. There exists a strategy for
continuum-armed bandits, depending only on $\varepsilon$, which
achieves regret
\[
R_T = \tilde{\mathrm{O}}(\sqrt{T}),\qquad S_T = \tilde {\mathrm{O}}(1/
\sqrt{T}),
\]
on
an event with probability $1 - \varepsilon$, whenever the reward
function $\mu$ is zooming continuous. Furthermore, on this event,
the strategy has a total computation time of $\tilde {\mathrm{O}}(T)$.
\end{corollary}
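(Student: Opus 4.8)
The plan is to derive \autoref{cor:zoom-rate} from the tree-armed bandit results of \autoref{sec:tab}, by reducing the continuum-armed problem on \(X = [0,1]^p\) to a tree-armed one over a carefully chosen, fixed (depending only on \(\varepsilon\)) countable family \(\mathcal T\) of trees. Each tree in \(\mathcal T\) I would build from a schedule of coordinate axes: at each level, bisect every current cell along the next axis in the schedule, so that the nodes of a given depth partition \(X\) into boxes. Letting the schedule vary with location (differently in disjoint subtrees) and range over a countable family of schedules --- rich enough to produce boxes of essentially any aspect ratio at every scale --- gives a countable \(\mathcal T\). Each tree I would equip with a geometric diameter labelling, \(\diam\) of a depth-\(d\) node equal to \(\gamma^{d}\), with \(\gamma\) ranging over a countable dense subset of \((0,1)\). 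Running on \(\mathcal T\) the adaptive tree-armed bandit strategy of \autoref{sec:tab} --- which combines all the trees and, up to logarithmic factors, matches the regret of whichever one is best suited to \(\mu\) --- is the strategy we use.

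The heart of the proof is to show that zooming continuity of \(\mu\) forces some member of \(\mathcal T\), with some labelling, to be compatible with \(\mu\) in the sense required by \autoref{sec:tab}, and to have zooming dimension \(\beta = 0.\) Let \(x^*\) be one of the finitely many global maxima, and let \(U_d\) be the depth-\(d\) node of a given tree containing \(x^*.\) Because \(\diam(U_d) \to 0,\) \autoref{def:zooming} applies: with \(\varepsilon\) in \eqref{eq:zooming} fixed small, once \(U_d\) is small enough, \(\mu^* - \inf_{U_{d+m}}\mu \le \tfrac12\bigl(\mu^* - \inf_{U_d}\mu\bigr)\) as soon as the box \(U_{d+m}\) is small relative to \(U_d,\) i.e.\ \(\sum_i(\diam_i(U_{d+m})/\diam_i(U_d))^2\) is below a fixed threshold; this needs only a bounded number \(m\) of further bisections provided the schedule splits every axis often enough. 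Crucially, the rescaled norm \(\norm{\cdot}_U\) in \autoref{def:zooming} is exactly what makes this bound uniform in \(d,\) so the oscillation of \(\mu\) on \(U_d\) decays geometrically, \(\mu^* - \inf_{U_d}\mu \le C\gamma_0^{\,d},\) for a rate \(\gamma_0\) determined by \(\mu\) near \(x^*.\) Choosing the schedule to split the axes at rates adapted to the local anisotropy of \(\mu\) at \(x^*\) --- and, since there are only finitely many maxima, a single location-dependent schedule can do this at all of them simultaneously --- and taking \(\gamma \ge \gamma_0\) gives a tree satisfying the Lipschitz-type hypothesis of \autoref{sec:tab} along the optimal paths. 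Away from the maxima, continuity of \(\mu\) on the compact \(X\) gives \(\mu^* - \mu \ge c > 0\) off small neighbourhoods of the maxima, so those cells are pruned after \(O(\log(1/c))\) levels; combined with the geometric decay of the cell oscillations, this forces the near-optimal cells at each scale to lie within a bounded number of nodes, i.e.\ \(\beta = 0.\)

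With such a tree and labelling in hand, the regret bound of \autoref{sec:tab} gives \(R_T = \tilde O(\sqrt{T})\) on an event of probability at least \(1-\varepsilon\) --- the \(\sqrt{T}\) is the \(\beta = 0\) rate, and the price of combining the countable family (hence of not knowing in advance which member works), together with all other overheads, is logarithmic and absorbed into the \(\tilde O.\) The simple-regret bound \(S_T = \tilde O(1/\sqrt{T})\) then follows by returning a suitable \(\hat x_T\) and applying the standard reduction from cumulative to simple regret \citep{bubeck_pure_2009}. The \(\tilde O(T)\) bound on the computation time is inherited from the per-step cost of the strategy of \autoref{sec:tab}, which is polylogarithmic in \(T.\) The main obstacle is the construction in the second paragraph: one must make \(\mathcal T\) rich enough that the content of \autoref{def:zooming} --- that \(\mu\) does not get rougher as one zooms in --- translates, for some sufficiently homogeneous tree, into a genuinely geometric and depth-uniform decay of the cell oscillations, and then argue that this pins the near-optimal region at each scale to boundedly many cells; this last step cannot rest on the one-sided estimate \eqref{eq:zooming} alone, and is exactly where the argument of \citet{slivkins_multi-armed_2011} must be corrected.
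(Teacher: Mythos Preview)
Your proposal rests on a misreading of what the tree-armed machinery in \autoref{sec:tab} provides. The algorithm there does \emph{not} take a family of trees over the full arm space \(X\) and adaptively select among them. Rather, it takes exactly \(p\) trees \(\mathcal T_1, \dots, \mathcal T_p\), one over each coordinate space \(X_i\), and works with product boxes \(B = \prod_i U_i\), \(U_i \in \mathcal T_i\); the ``combining multiple trees'' in the paper means adaptively choosing \emph{which coordinate axis to split along} (via the width estimate \(W_t(B)\) and \autoref{inv:bias}), not choosing among alternative trees over \(X\). Your countable family of schedule-trees over \([0,1]^p\), together with a further countable family of diameter labellings \(\gamma^d\), therefore cannot be fed into \autoref{alg:atb}; you would need a separate aggregation layer not present in the paper, and in any case a countable family cannot be processed in \(\tilde O(T)\) time. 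The diameter-labelling idea also has no counterpart here: the conditions in \autoref{def:wbr} are stated directly in terms of widths \(W(B)\) and depths \(d(B)\), not any \(\gamma^d\)-type metric.

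The paper's route is more direct. For continuum-armed bandits it fixes the \(p\) dyadic trees on \([0,1]\) once and for all, and proves (\autoref{thm:zooming}) that any zooming continuous \(\mu\) is well-behaved in the sense of \autoref{def:wbr} with \(\beta = 0\): the bound \eqref{eq:zooming} is used to build covers \(\mathcal C_m\) of the level sets \(\mathcal X_{\delta_m}\) by boundedly many boxes of width \(\le \delta_m/12p\), and --- crucially --- to verify the quality condition \autoref{def:wbr}\eqref{it:wbr-quality}, which is precisely what lets \autoref{alg:atb} detect, at run time, the axis along which \(\mu\) varies, without knowing the anisotropy of \(\mu\) in advance. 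The regret bounds then follow from \autoref{cor:doubling} (running \autoref{alg:atb} with quality \(\log(T)^{-1}\)), and the computation bound from \autoref{thm:complexity}. In short, the adaptation you try to encode in a countable family of pre-specified splitting schedules is what the paper achieves online, through the axis-detection mechanism built into \autoref{inv:bias}; your second paragraph is attacking a problem the paper's algorithm already solves for you.
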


\section{Tree-armed bandits}
\label{sec:tab}

In this section, we will describe our results on the tree-armed bandit
problem. In Section~\ref{sec:tps}, we will give a definition of the
problem, and in Section~\ref{sec:alg}, describe the algorithm we will use
to solve it. In Section~\ref{sec:wbr}, we will define a class of reward
functions over which our algorithm performs well, and in
Section~\ref{sec:results}, state our bounds on its regret and complexity.

\subsection{Problem statement}
\label{sec:tps}

In the tree-armed bandit problem, we again consider the multi-armed
bandit problem described in Section~\ref{sec:cab}, but now with a more
general arm space $X$. We allow any space $X$ on which we are
given a certain tree structure, which we define below; we will show
that the continuum-armed bandit problem is a special-case of this more
general setting.

To define our setting, let the arm space $X$ be a Cartesian product
$\prod_{i=1}^p X_i$, for coordinate spaces $X_i$. For $i = 1,
\dots, p$, let $\mathcal T_i$ be a tree with root node $X_i$, and
whose nodes are all given by non-empty subsets of $X_i$. Further
require that each node $U$ is either a leaf node, or has children
$V$ which form a partition of the set $U$. Each non-leaf node must
have at least 2 and at most $q$ children, for a constant $q \in
\N$.

Formally, we will also require a $\sigma$-algebra $\mathcal E$ on
$X$, defined in terms of the trees $\mathcal T_i$. For each
coordinate space $X_i$, let $\mathcal E_i$ be the sigma-algebra
generated by the nodes $U$ of $\mathcal T_i$. We then define
$\mathcal E$ to be the product $\sigma$-algebra of the $\mathcal
E_i$.

As before, we sequentially choose arms $x_t \in X$, and receive
rewards $Y_t\in[0,1]$; our goal remains to find a strategy
minimising the regrets $R_T$ and $S_T$, for a wide variety of
reward functions~$\mu$. However, we must now do so for general treed
spaces $X$, given only the trees~$\mathcal T_i$.

Continuum-armed bandits lie within this setting, letting each
coordinate space $X_i = [0,1]$. The trees $\mathcal T_i$ can be
chosen to be \emph{dyadic trees} on $[0, 1]$, defined as follows. The
dyadic tree on $[0, 1)$ is the tree with root node $[0, 1)$, and
where each node $[a, b)$ has children $[a, \frac12(a+b))$,
$[\frac12(a+b), b)$.

We can similarly define the dyadic tree on $[0, 1]$, instead
allowing each node with upper bound 1 to contain the point 1; this
tree is illustrated in Figure~\ref{fig:cab-tree}. If the trees
$\mathcal
T_i$ are dyadic trees on $[0, 1]$, then $\mathcal E$ is the Borel
$\sigma$-algebra on $[0,1]^p$, and we recover the setting of
Section~\ref{sec:cab}.

With these definitions, we can now consider continuum-armed bandits as
a special-case of tree-armed bandits. In the following section, we
will describe an algorithm for solving tree-armed bandits, which when
applied to continuum-armed bandits, achieves the bounds in
Corollary~\ref{cor:zoom-rate}.\vspace*{-2pt}

\subsection{Adaptive-treed bandits}\vspace*{-1pt}
\label{sec:alg}

Our algorithm proceeds in much the same fashion as the TaxonomyZoom
algorithm of Slivkins \cite{slivkins_multi-armed_2011}. At time $t$, we
partition the arm space $X$ into a set $\mathcal A_{t-1}$ of \emph
{active boxes}, chosen in terms of the past rewards $Y_1, \dots,
Y_{t-1}$. For each box $B \in\mathcal A_{t-1}$, we compute an \emph
{index} $I_{t-1}(B) \in\R$, which upper bounds its maximum reward
$\sup_{x \in B} \mu(x)$. We then select an active box $B_t$
maximising the index $I_{t-1}$, and pull an arm $x_t$ chosen
uniformly at random from $B_t$.

To describe the algorithm in detail, we will need some additional
definitions. We begin with the concepts which depend on the sample
space $X$: the set of boxes $B \subseteq X$ we will use to
construct our partitions, and the distribution $\pi$ over $X$ we
will treat as uniform.

In the specific case of continuum-armed bandits, the boxes $B$ will
be the products of dyadic intervals in $[0,1]^p$, and $\pi$ will
be the uniform distribution on $[0,1]^p$. However, since our methods
also apply to the more general tree-armed setting, we now give more
general descriptions of these ideas.

We define a box $B$ to be any product $\prod_{i=1}^p U_i$, where
each $U_i$ is a node in the tree $\mathcal T_i$; we further let
$\mathcal B$ denote the set of all such boxes. For a fixed reward
function $\mu\dvtx  X \to[0,1]$, we also define the width $W$ of a
box $B$ to be
\[
W(B) = \sup_{x \in B} \mu(x) - \inf_{x \in B}
\mu(x).
\]

We next define a distribution $\pi$ on the measurable space $(X,
\mathcal E)$, given as the product of distributions $\pi_i$ on the
spaces $(X_i, \mathcal E_i)$. Intuitively, $\pi_i$ will be the
distribution of a point in $X_i$ chosen by uniform random descent of
the tree $\mathcal T_i$.

To be precise, we generate a random sequence of nodes $U_n$ in
$\mathcal T_i$, setting $U_1 = X_i$. For $n \in\N$, if $U_n$
is a leaf node, we terminate the sequence at $U_n$; otherwise, we
choose $U_{n+1}$ uniformly at random from the children of $U_n$.
We then define a distribution $\pi_i$ on $(X_i, \mathcal E_i)$~by\vspace*{-2pt}
%
\begin{equation}
\label{eq:pi-def} \pi_i(U) = \P(\exists n \in\N: U_n =
U), \qquad U \in\mathcal T_i.
\end{equation}
It can be checked this uniquely defines a distribution $\pi_i$ on
$(X_i, \mathcal E_i)$.

We have thus defined the set $\mathcal B$ of boxes we will use to
partition $X$, and the distribution $\pi$ over $X$ we will take
as uniform. We note that for continuum-armed bandits, these
definitions agree with those given above.

In the following, we will also wish to sample from $\pi$; in the
case of continuum-armed bandits this is easy, as $\pi$ is the
uniform distribution. More generally, we will assume that $\pi$ can
be easily sampled from; note that we can always generate an
approximate sample by random descent of the trees $\mathcal T_i$.
Typically we will expect the $\sigma$-algebra $\mathcal E$ to be
fine enough to define this sample to our satisfaction, but if not, we
allow any sample satisfying \eqref{eq:pi-def}.\vadjust{\goodbreak}

We now move onto the definition of the index $I_t$. For each active
box $B$, $I_t(B)$ will be based on the empirical mean of past
rewards $Y_s$ associated with arms $x_s$ in $B$. To ensure this
is an upper bound for the maximum reward over $B$, we will add two
additional terms: one to correct for the stochastic error associated
with estimating the mean reward, and one to bound the difference
between mean and maximum.

Suppose that at time $t$, we select the active box $B_t$, drawing
$x_t$ from the distribution $\pi\mid B_t$. For any box $B$, we
will say $B$ was \emph{hit} at time $t$ if $x_t \in B \subseteq
B_t$. Let $n_t(B)$ be the number of times $s \le t$ at which
$B$ was hit, and if $n_t(B) > 0$, let $\mu_t(B)$ be the
corresponding average reward. For fixed $\mu$, we note that
$\mu_t(B)$ is an unbiased estimate of
\[
\mu(B) = \E_{\pi} \bigl[\mu(x) \mid x \in B \bigr],
\]
the expected reward on $B$ under $\pi$.

To bound the error in this estimate, we next define a confidence
radius $r_t(B)$, chosen so that $\llvert  \mu_t(B) - \mu(B)\rrvert
\le
r_t(B)$ with high probability. We first fix an error probability
$\varepsilon\in(0,1)$, which will control the accuracy of our
bound; we will show that our results on the regret hold with
probability $1 - \varepsilon$.

For any box $B = \prod_{i=1}^p U_i$, we then let $d(B)$ denote the
\emph{depth} of $B$, the maximum depth of any $U_i$ in its
corresponding tree $\mathcal T_i$, and define the constant
\[
\rho(B) = q^{p(d(B)+1)}.
\]
We also set $\tau= 4\varepsilon^{-1}$, and then define the
confidence radius
%
\begin{equation}
\label{eq:cr} r_t(B) = 2\sqrt{\log \bigl[\rho(B) \bigl(\tau+
n_t(B) \bigr) \bigr]/n_t(B)}.
\end{equation}

To conclude the definition of the index $I_t(B)$, we will need a
term bounding the difference between the mean and maximum reward on
$B$. This term will depend on a constant $\gamma\in(0, 1)$
called the \emph{quality}, a concept we inherit from
Slivkins \cite{slivkins_multi-armed_2011}.

The quality $\gamma$ describes how difficult we expect the
tree-armed bandit problem to be, and thus how conservatively our
algorithm should act. In the following sections, we discuss the
implications of $\gamma$ in more detail. For now, we note that
smaller $\gamma$ corresponds to a more difficult problem, and more
conservative behaviour.

Given a fixed choice of $\gamma$, we then define the index
\[
I_t(B) = \mu_t(B) + (1 + 2p\nu)r_t(B),
\]
where the constant
\[
\nu= 8\sqrt{2\gamma^{-1}\log_2 \bigl(2
\gamma^{-1} \bigr)};
\]
if $n_t(B) = 0$, we take $I_t(B) = +\infty$.
The index $I_t(B)$ is thus a sum of the empirical mean $\mu_t(B)$,
the confidence radius $r_t(B)$, and an additional term $2p\nu
r_t(B)$, which bounds the difference between the mean and maximum
reward over $B$.

%
\begin{figure}[b]
\begin{center}
\begin{tikzpicture}
\begin{groupplot}[group style={group size=2 by
1},height=6cm,width=6cm,ticks=none,
enlargelimits=false,xmin=0,xmax=1,ymin=0,ymax=1,
unbounded coords=jump,no markers]
\nextgroupplot[title={(i)}]
\addplot[black] coordinates {
(0.5, 0) (0.5, 1) (inf, inf)
(0, 0.5) (0.5, 0.5)
};
\node at (axis cs:0.25, 0.75) {$B$};
\nextgroupplot[title={(ii)}]
\addplot[black] coordinates {
(0.5, 0) (0.5, 1) (inf, inf)
(0, 0.5) (0.5, 0.5) (inf, inf)
(0.25, 0.5) (0.25, 1)
};
\node at (axis cs:0.125, 0.75) {$C_1$};
\node at (axis cs:0.375, 0.75) {$C_2$};
\end{groupplot}
\end{tikzpicture}
\end{center}
\caption{Plot (i) shows a partition $\mathcal A_t$ of the arm space
$X = [0,1]^2$ into active boxes; (ii) shows $\mathcal A_t$ after
the box $B$ has been split to maintain Invariant~\protect\ref
{inv:bias}. The
boxes $C_1, C_2$ satisfy condition \protect\eqref{eq:boxes-agree}.}
\label{fig:abex}
\end{figure}
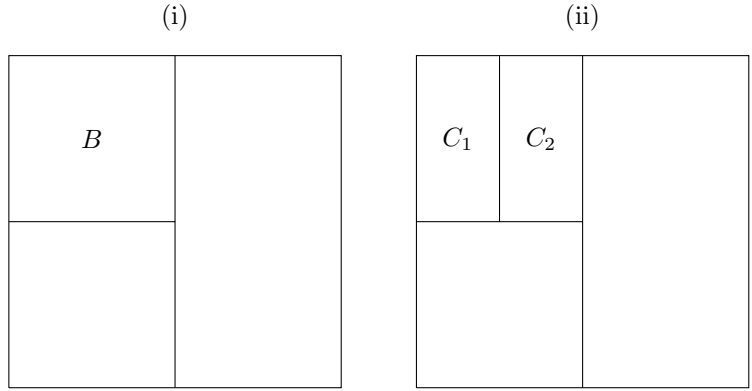

We next describe our set $\mathcal A_t$ of active boxes. Our goal
will be to choose as few active boxes as possible, while still
ensuring that for each active box $B$, the index $I_t(B)$ is an
upper bound for the maximum reward over $B$. To do so, we will aim
to select a set\vadjust{\goodbreak} of active boxes $B$ satisfying the inequality $W(B)
\le2p\nu r_t(B)$; we will thus need to find estimates of the widths
$W(B)$.

The estimates will work on the principle that, if the reward function
$\mu$ is well-behaved, we will be able to find large enough
sub-boxes $C_1, C_2 \subseteq B$ for which $\mu(C_1) - \mu(C_2)
\approx W(B)$. Since we always have $\mu(C_1) - \mu(C_2) \le W(B)$,
we may thus estimate $W(B)$ by a suitable maximum of these
differences, taken over many pairs $C_1, C_2$.

Since we will not have access to the means $\mu(C_k)$ themselves, we
will need to bound them using the data. We therefore define the lower
and upper bounds on the mean reward,
\[
\underline\mu_t(B) = \mu_t(B) - r_t(B),
\qquad\overline\mu_t(B) = \mu_t(B) +
r_t(B).
\]
We may then define our width estimate
\[
W_t(B) = \max_{(C_1,C_2) \in\mathcal M(B)} \underline
\mu_t(C_1) - \overline\mu_t(C_2).
\]

The maximum is taken over the set $\mathcal M(B)$ of all pairs
$(C_1, C_2)$ of boxes $C_1, C_2 \subseteq B$, which for $k = 1,
2$ satisfy:
\begin{enumerate}[(ii)]
\item$\pi(C_k \mid B) \ge\gamma$; and
\item for some $i = 1, \dots, p$, we have $U_{i,k} \in\mathcal
T_i$, and $U_j \in\mathcal T_j, j\ne i$, satisfying
%
\begin{equation}
\label{eq:boxes-agree} C_k = U_1 \times\cdots\times
U_{i-1} \times U_{i,k} \times U_{i+1} \times\cdots
\times U_p.
\end{equation}
\end{enumerate}
In other words, $\mathcal M(B)$ contains all pairs $(C_1, C_2)$
of boxes in $B$ which are not much smaller than $B$, and agree
except along one axis; one such pair is illustrated in
Figure~\ref{fig:abex}.

\begin{algorithm}[b]
\caption{Adaptive-treed bandits (ATB)}
\label{alg:atb}
\KwData{space $X$, trees $\mathcal T_i$, error rate
$\varepsilon$, quality $\gamma$}
set $\mathcal A_0 = \{X\}$\;
\For{$t=1, \dots, T$}{
select a box $B_t \in\mathcal A_{t-1}$ maximising $I_{t-1}$\;
play an arm $x_t$ drawn at random from $\pi\mid B_t$\;
set $\mathcal A_t = \mathcal A_{t-1}$\;
\While{Invariant~\textup{\ref{inv:bias}} is violated, by $B =
\prod_{j=1}^p U_j$, and $C_1, C_2$ differing along axis
$i$}{
remove $B$ from $\mathcal A_t$\;
\For{$V$ a child of $U_i$ in $\mathcal T_i$}{
add $U_1 \times\cdots\times U_{i-1} \times V \times
U_{i+1} \times\cdots\times U_p$ to $\mathcal A_t$\;}}}
\Return{$x_{T^*}$}
\end{algorithm}

Having defined our width estimates $W_t(B)$, we now return to the
set $\mathcal A_t$ of active boxes. We first state that at the
beginning of the algorithm, only the root box $X$ is active:
$\mathcal A_0 =  \{ X  \}$. At later times $t$, we define
each $\mathcal A_t$ in terms of $\mathcal A_{t-1}$, so as to
maintain the following invariant.

\begin{invariant}
\label{inv:bias}
Either:
\begin{enumerate}[(ii)]
\item$n_t(B) = 0$ for some $B \in\mathcal A_t$; or
\item$W_t(B) < \nu r_t(B)$ for all $B \in\mathcal A_t$.
\end{enumerate}
\end{invariant}

We start by setting with $\mathcal A_t = \mathcal A_{t-1}$. Suppose
this violates Invariant~\ref{inv:bias}, so we have $W_t(B) \ge\nu r_t(B)$
for some box $B = \prod_{j=1}^p U_j$;\vspace*{2pt} then let $W_t(B)$ be
maximised by boxes $C_1, C_2$ differing only along axis $i$. We
remove $B$ from $\mathcal A_t$, and replace it with the boxes
$U_1 \times U_{i-1} \times V \times U_{i+1} \times U_p$, for all
children $V$ of $U_i$ in $\mathcal T_i$.

We repeat this process until $\mathcal A_t$ satisfies
Invariant~\ref{inv:bias}; we note the process must terminate, as each step
increases the number of active boxes $B$, without creating
additional design points $x_s$. The process is illustrated in
Figure~\ref{fig:abex}.

We have thus described how we choose the set $\mathcal A_t$ of
active boxes. Finally, we define our estimate $\hat x_T$ of a global
maximum of $\mu$; we set $\hat x_T$ = $x_{T^*}$, where the
optimal time
\[
T^* = \argmin_{t=1}^T r_t(B_t),
\]
breaking ties arbitrarily.

We have then described in full our algorithm ATB, given by
Algorithm~\ref{alg:atb}. We note that our algorithm is closely related to
the TaxonomyZoom algorithm of Slivkins
\cite{slivkins_multi-armed_2011}; we
briefly describe the changes.

First, to allow us to work with infinite trees, we have altered the
confidence radius $r_t(B)$ and constant $\nu$. Second, to give
an explicit algorithm for noisy global optimisation, we have included
a rule for choosing an optimal point $\hat x_T$. Third, we have
altered Invariant~\ref{inv:bias} to allow an easier bound on the
computational complexity.

Last, we have made a number of changes which allow us to work with
multiple trees $\mathcal T_i$. The first of these is that we
partition the arm space $X$ into boxes $B \in\mathcal B$ given by
a product of nodes in trees, rather than the nodes themselves. The
second is that we have altered the width estimate $W_t(B)$ to
require that the boxes $C_1, C_2$ agree except in one axis; this
allows us to detect not only the width of a box $B$, but also an
axis $i$ along which it varies.

The final change is in the procedure for ensuring that
Invariant~\ref{inv:bias} holds. When the invariant is violated by a box
$B$, we split that box only along the axis $i$; this process
allows us to adapt the shape of the active boxes $B$ to the shape of
the reward function~$\mu$.

\subsection{Well-behaved rewards}
\label{sec:wbr}

We now describe the conditions we will require on the reward function
$\mu$. Our conditions will be motivated by Definition~\ref{def:zooming},
and we will see that they hold in continuum-armed bandits whenever
$\mu$ is zooming continuous. We will state the conditions more
generally for the tree-armed case, however, as this allows us to both
argue more directly, and also compare our conditions with those in
previous work.

To begin, we will need some preliminary definitions. In the following,
we will consider collections $\mathcal C$ of disjoint boxes $B \in
\mathcal B$. We will say a box $B$ is \emph{on} $\mathcal C$, if
it is a union of boxes in $\mathcal C$. We will further say
$\mathcal C$ is a \emph{refinement} of $\mathcal C'$, if this is
true for all $B \in\mathcal C'$.

A specific type of collection $\mathcal C$ we will consider is the
\emph{grid}. A grid $\mathcal G$ is any set of boxes
$ \{\prod_{i=1}^p U_i : U_i \in\mathcal S_i \}$, where for
each $i = 1, \dots, p$, $\mathcal S_i$ is a collection of disjoint
nodes in $\mathcal T_i$. We will say grids $\mathcal G_1, \dots,
\mathcal G_L$ are \emph{separated}, if for any box $B$ on
$\bigcup_{l=1}^L \mathcal G_l$, $B$ is on a single $\mathcal
G_l$.

Finally, for a fixed reward function $\mu$, given $\delta> 0$ we
define the level set
\[
\mathcal X_\delta= \bigl\{x \in X : \mu^* - \mu(x) \le\delta \bigr\},
\]
and for any
box $B$, we define its maximum and average badness,
%
\begin{equation}
\label{eq:deltas} \delta(B) = \mu^* - \min_{x \in B} \mu(x), \qquad
\Delta(B) = \mu^* - \mu(B).
\end{equation}
We are now ready to state our conditions on $\mu$.

\begin{definition}
\label{def:wbr}
Let $\mu\dvtx X \to[0, 1]$ be $\mathcal E$-measurable. We will
say $\mu$ is \emph{well-behaved} if for each $m \in\N$, we have
a partition $\mathcal B_m$ of $X$, made up of boxes $B \in
\mathcal B$, and a subset $\mathcal C_m \subseteq\mathcal B_m$,
satisfying the following.

\begin{enumerate}[(iii)]
\item\label{it:wbr-cover} For each $m \in\N$, letting $\delta_m
= 2^{1-m}$, the level set $\mathcal X_{\delta_m}$ is covered by
$\mathcal C_m$.
\item\label{it:wbr-card} Each $\mathcal C_m$ has cardinality at most
$\kappa
\delta_m^{-\beta}$, for constants $\kappa> 0$, $\beta\ge
0$.
\item\label{it:wbr-boxes} For each $m \in\N$, the boxes $B \in
\mathcal C_m$ satisfy:
\begin{enumerate}[(b)]
\item[(a)]\label{it:wbr-width}$W(B) \le\delta_m/12p$, and
\item[(b)]\label{it:wbr-depth}$d(B) \le\lambda m$, for a constant
$\lambda> 0$.
\end{enumerate}
\item\label{it:wbr-quality} For each box $B$ on some $\mathcal
C_m$, there exist two sub-boxes $C_1, C_2 \subseteq B$
satisfying condition \eqref{eq:boxes-agree}, with:
\begin{enumerate}[(b)]
\item[(a)]$\pi(C_k \mid B) \ge\gamma$, $k =
1, 2$, for a constant $\gamma\in(0, 1)$, and
\item[(b)]$\mu(C_1) - \mu(C_2) \ge\frac1p(W(B)
- \frac14
\delta(B))$.
\end{enumerate}
\item\label{it:wbr-grids} For each $m \in\N$, we have some $L_m
\in\N$, and separated grids $\mathcal G_{1,m}, \dots,
\mathcal G_{L_m,m}$, such that $\mathcal C_m \subseteq
\bigcup_{l=1}^{L_m} \mathcal G_{l,m}$.
\item\label{it:wbr-finer} Each $\mathcal B_{m+1}$ is a refinement
of $\mathcal B_m$.
\end{enumerate}

We will call $\beta$ the \emph{zooming dimension}, and $\gamma$
the \emph{quality}.
\end{definition}

We next discuss the implications of our definition, which is
illustrated in Figure~\ref{fig:wbr}. Firstly, we note that the
conditions are all satisfied when the reward function $\mu$ is
zooming continuous.

%
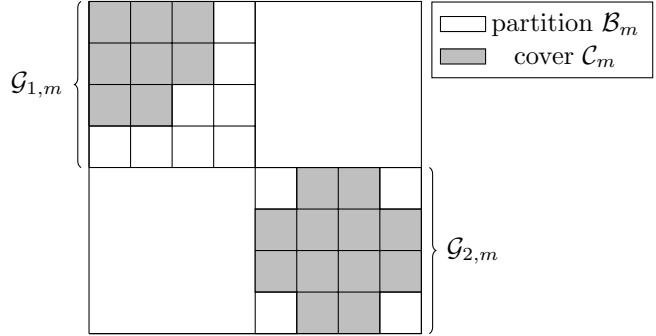
\begin{figure}
\begin{center}
\begin{tikzpicture}
\begin{axis}[height=6cm,width=6cm,ticks=none,clip=false,
enlargelimits=false,unbounded coords=jump,no markers,
legend style={legend pos=outer north east},area legend]
\addplot[black] coordinates {
(0.5, 0) (0.5, 1) (inf, inf)
(0, 0.5) (1, 0.5)
};
\addlegendentry{partition $\mathcal B_m$}
\addplot[black,fill=lightgray] coordinates {
(0, 0.625) (0, 1) (0.375, 1) (0.375, 0.75)
(0.25, 0.75) (0.25, 0.625) (0, 0.625) (inf, inf)
(0.625, 0) (0.875, 0) (0.875, 0.125) (1, 0.125)
(1, 0.375) (0.875, 0.375) (0.875, 0.5) (0.625, 0.5)
(0.625, 0.375) (0.5, 0.375) (0.5, 0.125) (0.625, 0.125)
(0.625, 0)
} \closedcycle;
\addlegendentry{cover $\mathcal C_m$}
\draw[step=12.5] (axis cs:0, 0.5) grid (axis cs:0.5, 1);
\draw[step=12.5] (axis cs:0.5, 0) grid (axis cs:1, 0.5);
\draw[xshift=-3,decorate,decoration={brace}]
(axis cs:0, 0.5) -- (axis cs:0, 1) node[midway,left,xshift=-3]
{$\mathcal G_{1,m}$};
\draw[xshift=3,decorate,decoration={brace}]
(axis cs:1, 0.5) -- (axis cs:1, 0) node[midway,right,xshift=3]
{$\mathcal G_{2,m}$};
\end{axis}
\end{tikzpicture}
\end{center}
\caption{A partition $\mathcal B_m$ of the arm space $X =
[0,1]^2$, together with the cover $\mathcal C_m$, and grids $\mathcal G_{l,m}$.}
\label{fig:wbr}
\end{figure}

\begin{theorem}
\label{thm:zooming}
Let the arm space $X = [0,1]^p$, given as the product of
coordinate spaces $X_i = [0, 1]$, $i = 1, \dots, p$, with dyadic
trees $\mathcal T_i$ over each $X_i$. If $\mu\dvtx X \to[0, 1]$ is
zooming continuous, then $\mu$ is well-behaved, with zooming
dimension $\beta= 0$.
\end{theorem}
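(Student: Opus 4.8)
The plan is to construct, for each $m$, a partition $\mathcal{B}_m$ refining $\mathcal{B}_{m-1}$ together with a sub-collection $\mathcal{C}_m$ covering $\mathcal{X}_{\delta_m}$, and to verify every clause of \autoref{def:wbr} with $\beta=0$ --- that is, with $|\mathcal{C}_m|$ bounded independently of $m$. Since $\mu$ is continuous with finitely many global maxima $x_1^*,\dots,x_L^*$, for small $\delta$ the set $\mathcal{X}_\delta$ lies in disjoint dyadic-box neighbourhoods of the $x_l^*$; so I would dispose of the finitely many $m$ for which $\delta_m$ is not yet small by a single crude partition, and then work near one maximum $x^*=x_l^*$ at a time, inside a fixed box $B^0\ni x^*$ with $W(B^0)\le\delta_0$ and $\diam(B^0)$ small.

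The quantitative input from zooming continuity is a \emph{contraction estimate}: there are constants $\varepsilon_0>0$ and $k^*\in\N$, depending only on $\mu$ and $p$, such that if $x^*$ is a global maximum, $B\ni x^*$ a box with $\diam(B)\le\varepsilon_0$, and $B'\subseteq B$ is obtained by refining every coordinate by $k^*$ further levels, then $W(B')\le\tfrac12 W(B)$. Indeed such a $B'$ has $\norm{x-y}_B\le\sqrt p\,2^{-k^*}$ for all $x,y\in B'$; choosing $\varepsilon_0$ small enough that the supremum in \eqref{eq:zooming} at that $\varepsilon_0$ is below $\tfrac12$ --- and, with a little more room, below $1/(12p)$ times any fixed constant needed later --- and then $k^*$ large enough that $\sqrt p\,2^{-k^*}\le\varepsilon_0$, applying \eqref{eq:zooming} with $U=B$ and $\varepsilon=\varepsilon_0$ gives $W(B')\le\tfrac12 W(B)$. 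Iterating this down a chain of nested boxes around $x^*$ forces their widths to decay geometrically, and, applied to the boxes branching \emph{off} such a chain, shows each of those has width at most half that of its parent on the chain.

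The cover $\mathcal{C}_m$ near $x^*$ I would build by iterated subdivision, in the spirit of the algorithm ATB: starting from $B^0$, repeatedly take a current box $B$ with $W(B)>\delta_m/12p$ that meets $\mathcal{X}_{\delta_m}$, subdivide it (refining, as the contraction estimate requires, along enough coordinates: all of them for boxes still containing $x^*$, and otherwise the coordinate along which the width-detecting pair of \autoref{def:wbr}\eqref{it:wbr-quality} differs), and retain only those sub-boxes still meeting $\mathcal{X}_{\delta_m}$. This covers $\mathcal{X}_{\delta_m}$, and by the contraction estimate every box reaches width $\le\delta_m/12p$ after boundedly many rounds, at depth $O(m)$. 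The heart of the matter --- and the step I expect to be the main obstacle --- is to bound $|\mathcal{C}_m|$ by a constant independent of $m$, i.e.\ to prove $\beta=0$. The reason this should hold is that zooming continuity is uniform over \emph{all} neighbourhoods $U\ni x^*$, so, having zoomed in to the box(es) on which $\mu$ varies by $\approx\delta_m$, the function looks up to rescaling like a fixed configuration, and $\mathcal{X}_{\delta_m}$ is resolved into width-$\delta_m/12p$ pieces in a bounded number of ways --- whereas for a function such as $-\exp(-1/x^2)$, excluded from \autoref{def:zooming} precisely because its relative oscillation blows up on zooming in, this count instead grows like $m$. Making the argument rigorous means tracking the adaptive subdivision together with the coordinate-wise scales of the boxes it produces and ruling out a constant-factor increase at each of the $\approx m$ scales; the natural bookkeeping is to sort the produced boxes by their vector of coordinate depths and to bound both the number of distinct such vectors and the number of boxes sharing one, which at the same time yields the pairwise-separated grids demanded by \autoref{def:wbr}\eqref{it:wbr-grids}.

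It remains to check the other clauses. The quality condition \autoref{def:wbr}\eqref{it:wbr-quality} --- a pair of fat sub-boxes $C_1,C_2$ differing in one axis with $\pi(C_k\mid B)\ge\gamma$ and $\mu(C_1)-\mu(C_2)\ge\tfrac1p(W(B)-\tfrac14\delta(B))$ --- I would obtain by noting that for $B$ near $x^*$ the width $W(B)$, once the slack $\tfrac14\delta(B)$ absorbs boxes far from the maximum, is spread across the coordinates, so some coordinate carries at least a $1/p$ share and a pair of fat slices differing in that coordinate exhibits it; uniformity of zooming continuity again keeps the width from hiding in a negligibly thin sliver, so the slices can be taken of $\pi$-measure at least $\gamma$ for a fixed $\gamma$. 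The width and depth bounds of \autoref{def:wbr}\eqref{it:wbr-boxes} are built in (the subdivision stops at width $\delta_m/12p$, and geometric contraction caps the depth at $O(m)$), \autoref{def:wbr}\eqref{it:wbr-card} with $\beta=0$ is the cardinality bound above, \autoref{def:wbr}\eqref{it:wbr-cover} holds by construction, and \autoref{def:wbr}\eqref{it:wbr-finer} is arranged by defining each $\mathcal{B}_{m+1}$ to refine $\mathcal{B}_m$ and keeping $\mathcal{C}_{m+1}$'s boxes inside those of $\mathcal{B}_m$. This shows every zooming continuous $\mu$ is well-behaved with zooming dimension $0$.
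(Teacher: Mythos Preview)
Your proposal correctly identifies the structure of the argument and the key difficulty, but it leaves the central step --- bounding $|\mathcal C_m|$ independently of $m$ --- as an unresolved obstacle. The iterative-subdivision construction you describe makes this bound genuinely hard: you need roughly $m$ rounds of halving to pass from width $\approx\delta_0$ down to $\delta_m/12p$, and nothing in your sketch prevents the number of surviving boxes from picking up a constant factor at each round. Your proposed bookkeeping (sorting by depth-vectors and bounding both the number of vectors and the multiplicity) is not obviously finite in $m$ either, since the number of admissible depth-vectors grows with $m$.

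The paper sidesteps this entirely by a different, much more direct construction. Rather than subdividing iteratively, it reads off the box sizes from the level set itself: for each maximum $x_l^*$ and each coordinate $i$, choose the dyadic scale $j_{i,l,m}$ so that $2^{J-j_{i,l,m}}\ge\diam_i(\mathcal X_{l,\delta_m})$, and let $\mathcal G_{l,m}$ be the grid of boxes with $\diam_i(B)=2^{-j_{i,l,m}}$ covering $\mathcal X_{l,\delta_m}$. By construction each such grid contains at most $(2^J+1)^p$ boxes, so $|\mathcal C_m|\le L(2^J+1)^p$ and $\beta=0$ is immediate. Zooming continuity is then invoked, not to drive a contraction iteration, but once: with $U=\bigcup\mathcal C_{l,m}$ (a neighbourhood of $x_l^*$ with $\diam(U)\to 0$), every box $B\in\mathcal C_{l,m}$ has $\norm{x-y}_U\le\sqrt p\,2^{1-J}$ for $x,y\in B$, so \eqref{eq:zooming} gives $W(B)\le\delta_m/12p$ for $J$ large enough. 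The depth bound \eqref{it:wbr-depth} follows because the box containing $x_l^*$ already lies inside $\mathcal X_{l,\delta_{m+1}}$, forcing $j_{i,l,m+1}\le j_{i,l,m}+J$. The quality condition \eqref{it:wbr-quality} is handled by the same device at a second scale $K$: subdivide $B$ into a $2^{pK}$ grid, use zooming continuity (now with $U=\mathcal X_{l,\delta(B)}$) to make each sub-box have width $\le W(B)/8$, pick sub-boxes $C_0,C_p$ realising the min and max of $\mu$ on $B$, and walk from $C_0$ to $C_p$ changing one axis at a time; some consecutive pair gives the required gap with $\gamma=2^{-pK}$.

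In short: your instinct that zooming continuity makes the rescaled picture ``look the same'' is exactly right, but the way to exploit it is to choose the cover's scales from $\diam_i(\mathcal X_{l,\delta_m})$ directly, not to hope an adaptive refinement terminates uniformly.
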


Second, we note that the conditions of Definition~\ref{def:wbr} are related
to other conditions previously studied in the literature. The zooming
dimension $\beta\ge0$, and quality $\gamma\in(0,1)$, are
related to similar concepts defined by
Kleinberg, Slivkins and Upfal \cite{kleinberg_multi-armed_2008} and
Slivkins \cite{slivkins_multi-armed_2011}, and measure the difficulty of
solving a bandit problem with reward function $\mu$, when
subdividing the arm space $X$ using the trees $\mathcal T_i$.

We will discuss in more detail the meaning of these quantities below;
for now, we note that they are a function both of the reward function
$\mu$, and the trees $\mathcal T_i$. In the following, we will
assume that we have some natural choice of trees $\mathcal T_i$ we
may treat as fixed, as is the case in continuum-armed bandits; we may
thus consider these quantities primarily as a function of $\mu$.

Intuitively, conditions \eqref{it:wbr-cover}--\eqref{it:wbr-boxes}(\hyperref[it:wbr-width]{a})
state that $\mu$ has zooming dimension $\beta\ge0$. This concept
was introduced by Kleinberg, Slivkins and Upfal \cite{kleinberg_multi-armed_2008}, and bounds the
number of near-maximal boxes $B$ we must evaluate to find the global
maxima of $\mu$. The larger $\beta$ is, the more alternatives we
must consider, and the worse our regret rates will be.

Kleinberg, Slivkins and Upfal \cite{kleinberg_multi-armed_2008} defined the zooming dimension
relative to a fixed metric, with respect to which $\mu$ is assumed
to be Lipschitz. Our formulation is more closely related to that of
Slivkins \cite{slivkins_multi-armed_2011}, who did not fix a metric, but
instead used the strongest metric which $\mu$ is Lipschitz with
respect to.

Our condition improves upon that of Slivkins \cite{slivkins_multi-armed_2011}
by allowing the cover $\mathcal C_m$ to be made up of boxes $B \in
\mathcal B$, constructed not just from a single tree $\mathcal
T_i$, but also from arbitrary combinations of them. This flexibility
allows us to ensure that a wider variety of reward functions $\mu$
will have zooming dimension $\beta= 0$; in particular, it is
necessary to get near-optimal rates for the separable maxima in
Lemma~\ref{lem:zcnd}.

For the continuum-armed bandit problems we will consider, we will
always have zooming dimension $\beta= 0$. However, in tree-armed
bandits, we will also consider the case $\beta> 0$, as this allows
our results to hold in more generality. In particular, we will prove
near-matching lower bounds on the regret in terms of all $\beta\ge
0$.

Condition \eqref{it:wbr-boxes}(\hyperref[it:wbr-depth]{b}) controls the depth of near-maximal
boxes $B$; assuming this condition allows us to construct an
algorithm which is more computationally efficient. A similar approach
is considered by Bubeck \textit{et al.} \cite{bubeck_x-armed_2011}, who discuss artificially
truncating trees at a certain depth.

Intuitively, condition \eqref{it:wbr-quality} states that $\mu$ has
quality $\gamma\in(0, 1)$. This concept was introduced by
Slivkins \cite{slivkins_multi-armed_2011}, and bounds the difficulty in
estimating the widths $W(B)$. Our version of this condition is new,
and improves upon Slivkins' in two ways.

First, we require the bound to hold for a larger collection of boxes
$B$; we will show this change allows us to fix a gap in the argument
of Slivkins \cite{slivkins_multi-armed_2011}. Second, we require the boxes
$C_1, C_2$ to satisfy condition \eqref{eq:boxes-agree}. In the case
$p = 1$, when we have a single tree over $X$, this condition is
trivial. However, when $p > 1$, it allows our algorithm to detect
the axes along which $\mu$ varies, and so adaptively combine the
trees $\mathcal T_i$.

Conditions \eqref{it:wbr-grids} and \eqref{it:wbr-finer} are new to
this work, and are also required to work with multiple trees
efficiently. Again, when $p = 1$ the conditions can be shown to be
trivially satisfiable; when $p > 1$, they will be necessary to prove
our adaptive results.

Condition \eqref{it:wbr-grids} requires that the near-maximal boxes
$B$ lie within a grid structure; that the boxes can be created by
independent subdivisions of the axes $X_i$. This condition will be
necessary to ensure that when our algorithm subdivides the axes, it
does not create too many active boxes.

Condition \eqref{it:wbr-finer} requires that the near-maximal boxes
$B \in\mathcal C_m$ become smaller as $m$ increases; that they
describe consistent regions of the arm space $X$ as $\delta_m \to
0$. This condition will be necessary to ensure that as our algorithm
progresses, the active boxes created at earlier time steps do not
hinder us at later ones.

While the main motivation behind Definition~\ref{def:wbr} is our application
to continuum-armed bandits, our results can also be applied to other
tree-armed bandit problems, including those with finite trees. We note
that while our definitions do not require it, it will be easiest to
satisfy Definition~\ref{def:wbr} when all leaf nodes $U_i$ in trees
$\mathcal T_i$ are singleton sets, a condition which should be
satisfied by any reasonable choice of trees $\mathcal T_i$.

\subsection{Results for tree-armed bandits}
\label{sec:results}

We now give our regret bounds for tree-armed bandits. We will prove
our results uniformly over a class of reward functions $\mu$, which
we describe below.

For an arm space $X$, given as the product of coordinate spaces
$X_i$, $i = 1, \dots, p$, each equipped with tree $\mathcal
T_i$, a zooming dimension $\beta\ge0$, a quality $\gamma\in(0,
1)$, and constants $\kappa, \lambda> 0$, let
\[
\mathcal P = \mathcal P(X, \mathcal T, \beta, \gamma, \kappa, \lambda)
\]
denote the class of arm distributions $P$ whose reward
functions $\mu$ are well-behaved, with the above constants. We note
that the class $\mathcal P$ is increasing in the parameters
$\beta$, $\kappa$ and $\lambda$, and decreasing in $\gamma$.

We also fix some notation we will use to describe our rates of
regret. Given functions $f, g \dvtx  \N\to\R$ satisfying $f(T) =
\mathrm{O}(g(T))$ as $T \to\infty$, we write $f(T) \lesssim g(T)$, and
$g(T) \gtrsim f(T)$. If both $f(T) \lesssim g(T)$ and $f(T)
\gtrsim g(T)$, we write $f(T) \approx g(T)$.

We now begin by establishing a lower bound on the regret any algorithm
can achieve, in our setting of the tree-armed bandit problem. Our
argument works by reducing to a finite arm space, and then applying a
lower bound of Bubeck \cite{bubeck_jeux_2010}.

\begin{theorem}
\label{thm:lower-bound}
Suppose the trees $\mathcal T_i$ have no leaf nodes, and fix $\beta
\ge0$. For large enough $\kappa, \lambda> 0$, small enough
$\gamma, \varepsilon\in(0,1)$, and any strategy for tree-armed
bandits, we have events $E_T$ and $E_T'$, each of probability at
least $\varepsilon$ under some $P \in\mathcal P$, for which
\[
R_T \ge Tr_T \qquad \mbox{on } E_T,\qquad
S_T \ge r_T \qquad \mbox{on } E_T',
\]
for a rate
\[
r_T \gtrsim T^{-1/(\beta+2)}.
\]
\end{theorem}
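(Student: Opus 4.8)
The goal is a minimax lower bound for tree-armed bandits of the form $r_T \gtrsim T^{-1/(\beta+2)}$, holding uniformly over $\mathcal P(X,\mathcal T,\beta,\gamma,\kappa,\lambda)$ for suitable constants. The natural route, as the text hints, is reduction to a finite bandit problem and then invocation of a known minimax lower bound — here, the bound of \citet{bubeck_jeux_2010}, which says roughly that for $K$ arms with gaps of size $\Delta$, any strategy suffers cumulative regret $\gtrsim \sqrt{KT}$ (equivalently, with probability bounded below, regret $\gtrsim T\Delta$ when $\Delta \approx \sqrt{K/T}$), and similarly for the simple regret a per-step error of $\Delta$. So the plan has three stages: (1) carve out of the infinite tree a family of $K$ disjoint boxes at a common depth, each of $\pi$-mass comparable and each coarse enough to serve as a single ``arm''; (2) define a parametrised family of reward functions $\mu$ — all equal to a baseline except bumped up on one chosen box — and check each member lies in $\mathcal P$ with the prescribed $\beta,\gamma,\kappa,\lambda$; (3) quote \citet{bubeck_jeux_2010} to the finite problem induced on these $K$ arms and translate back.

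\textbf{Step 1: building the hard instance.} Since the $\mathcal T_i$ have no leaves, I can descend each tree to any depth. Fix a depth $d$ and take $K$ boxes $B_1,\dots,B_K \in \mathcal B$, pairwise disjoint, all of depth $\le d$, with $\pi(B_j)$ bounded below by something like $q^{-pd}$; the branching bounds ($2 \le$ children $\le q$) guarantee $K \asymp 2^{pd}$ such boxes exist with comparable masses. I will want $d \asymp \log_2 K / p$, and ultimately $K \asymp \delta^{-\beta}$ for a gap parameter $\delta$ to be optimised. The baseline reward is a constant $\mu_0$ (say $\tfrac12$), and for each $j$ I let $\mu_j = \mu_0 + \delta\,\mathbf 1_{B_j}$, so $\mu_j^* = \mu_0+\delta$ is attained precisely on $B_j$. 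The suboptimality gap of any arm not in $B_j$ is exactly $\delta$.

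\textbf{Step 2: membership in $\mathcal P$.} This is the step I expect to be the main obstacle, because \autoref{def:wbr} is intricate. For each $\mu_j$ I must exhibit partitions $\mathcal B_m$, covers $\mathcal C_m$, grids, etc. The idea: take $\mathcal B_m$ to be the common refinement of the partition $\{B_1,\dots,B_K,\,\text{complement pieces}\}$ with successively deeper subdivisions as $m$ grows (so \eqref{it:wbr-finer} holds by construction). For $m$ with $\delta_m = 2^{1-m} \ge 2\delta$, the level set $\mathcal X_{\delta_m}$ is \emph{all} of $X$, so $\mathcal C_m$ must cover everything — but there are only boundedly many such $m$ (those with $m \le 1+\log_2(1/\delta)$, i.e.\ $O(\log(1/\delta))$ of them), and for each the cover has at most $|\mathcal B_m|$ boxes; choosing the subdivision rate so that $|\mathcal B_m| \le \kappa\,2^{-m}$-ish and picking $\kappa$ large absorbs these. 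For $\delta_m < \delta$, $\mathcal X_{\delta_m} = B_j$, which is a single box, so $\mathcal C_m$ can be taken to be a bounded-size family of sub-boxes of $B_j$ refining it; since $W(B) = 0$ for any box contained in $B_j$ or in the complement, condition \eqref{it:wbr-width} is free once the boxes straddling $\partial B_j$ (of which there are $O(1)$) are made small — and the constancy of $\mu$ off the single bump means $W(B)=0$ there too. The depth condition \eqref{it:wbr-depth} forces $\lambda$ large (comparable to $d/\log(1/\delta) \asymp 1/(p\beta)$-ish), the quality condition \eqref{it:wbr-quality} holds with $\gamma$ any fixed small constant because one can always find two sub-boxes $C_1,C_2$ agreeing off one axis with $\pi$-mass $\ge\gamma$ and, on a box where $\mu$ is constant, $\mu(C_1)-\mu(C_2)=0 \ge \tfrac1p(W(B)-\tfrac14\delta(B))$ since $W(B)=0$; the only delicate boxes are those meeting $\partial B_j$, handled by taking them small. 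Finally \eqref{it:wbr-grids}: the relevant $\mathcal C_m$ are supported on $O(1)$ boxes, trivially coverable by $O(1)$ pairwise-separated grids. Thus all members $\mu_j$ lie in $\mathcal P$ for a \emph{common} choice of $\beta$ (the free parameter), large $\kappa,\lambda$, and small $\gamma$ — matching the theorem's quantifier structure.

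\textbf{Step 3: applying the finite lower bound and optimising.} Conditional on the algorithm only ever seeing rewards, the tree-armed strategy restricted to this instance is a strategy for a $K$-armed bandit where one arm (the one whose observations land in $B_j$) has mean $\mu_0+\delta$ and the rest mean $\mu_0$ — with the wrinkle that pulling a box $B_t \not\subseteq B_j$ but with $B_t \supsetneq$ nothing special still yields mean $\mu_0$ unless $B_t \subseteq B_j$ or $B_t$ overlaps $B_j$; by working at the granularity of the $B_j$'s (treating a pull of any box meeting $B_j$ favourably for the algorithm) one gets a genuine reduction to $K$ Bernoulli-like arms. Then \citet{bubeck_jeux_2010} gives: for the uniform prior over $j \in \{1,\dots,K\}$, some $j$ forces, with probability at least some constant $\varepsilon$, either $R_T \ge c\,T\delta$ (when $\delta \asymp \sqrt{K/T}$) or $S_T \ge c\,\delta$. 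Now substitute $K \asymp \delta^{-\beta}$: the relation $\delta \asymp \sqrt{K/T} \asymp \sqrt{\delta^{-\beta}/T}$ gives $\delta^{2+\beta} \asymp 1/T$, i.e.\ $\delta \asymp T^{-1/(\beta+2)}$. Setting $r_T = c\,\delta \asymp T^{-1/(\beta+2)}$ yields $R_T \ge Tr_T$ on an event $E_T$ and $S_T \ge r_T$ on an event $E_T'$, each of probability $\ge \varepsilon$, under the corresponding worst-case $P = P_j \in \mathcal P$. Rescaling $\varepsilon$ and the constants $c,\kappa,\lambda,\gamma$ to meet the ``large enough / small enough'' hypotheses completes the argument. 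The one genuine subtlety to be careful about in the write-up is ensuring the \emph{same} parameter set $(\beta,\gamma,\kappa,\lambda)$ works for all $K$ instances simultaneously and that $K$ can be taken as large as $\delta^{-\beta}$ while keeping the depth bound $\lambda m$ valid — this is exactly why $\lambda$ and $\kappa$ must be taken large depending on $\beta$.
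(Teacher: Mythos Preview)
Your reduction strategy and choice of $K\asymp\delta^{-\beta}$, $\delta\asymp T^{-1/(\beta+2)}$ are correct, and match the paper. But the hard-instance family you propose, $\mu_j=\mu_0+\delta\,\mathbf 1_{B_j}$, does \emph{not} lie in $\mathcal P$ when $\beta>0$: it fails the quality condition \autoref{def:wbr}\eqref{it:wbr-quality}. That condition must hold for every box $B$ that is a union of boxes in some $\mathcal C_m$, and in particular for $B=X$ itself (which is on $\mathcal C_1$ once $\delta<1/12p$). For $B=X$ we have $W(X)=\delta(X)=\delta$, so we must exhibit sub-boxes $C_1,C_2$ with $\pi(C_k)\ge\gamma$ and $\mu(C_1)-\mu(C_2)\ge 3\delta/(4p)$. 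However, any $C_1$ with $\pi(C_1)\ge\gamma$ satisfies
\[
\mu(C_1)-\mu_0 \;=\; \delta\,\frac{\pi(B_j\cap C_1)}{\pi(C_1)}\;\le\;\frac{\delta\,\pi(B_j)}{\gamma}\;\asymp\;\frac{\delta^{1+\beta}}{\gamma},
\]
which is $o(\delta)$ as $T\to\infty$; the required inequality is therefore impossible for any fixed $\gamma>0$. Your sentence ``on a box where $\mu$ is constant, $\mu(C_1)-\mu(C_2)=0\ge\tfrac1p(W(B)-\tfrac14\delta(B))$ since $W(B)=0$'' is fine for boxes contained in $B_j$ or in its complement, but misses the boxes on $\mathcal C_m$ that straddle the bump --- and $X$ is one such box.

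This is exactly why the paper does \emph{not} use a single bump. Its construction (for $\beta>0$) builds a self-similar hierarchy: nodes $U_{i,j}\in\mathcal T_1$ at each level $i=0,\dots,l$, with the reward function
\[
\mu_{l,k}(x)=\tfrac13\Bigl(1+(1-\alpha)\sum_{i=0}^{l-1}\alpha^i\,\mathbf 1\bigl(x_1\in\textstyle\bigcup_j U_{i,j}\bigr)+\alpha^l\,\mathbf 1(x_1\in U_{l,k})\Bigr),\qquad \alpha=2^{-1/\beta}.
\]
This has $2^{l-1}$ near-optimal local maxima and a single global one at $U_{l,k}$. The point of the hierarchy is that at \emph{every} scale, any box on $\mathcal C_m$ contains sub-boxes of bounded relative $\pi$-mass on which $\mu$ differs by a fixed fraction of $W(B)$, so the quality condition holds with a fixed $\gamma$ independent of $T$. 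Verifying \autoref{def:wbr} for this family is the bulk of the paper's proof; the reduction to \citet{bubeck_jeux_2010} (via the paper's \autoref{lem:tab-lower-bound}) then proceeds as you outline. For $\beta=0$ your two-arm idea is essentially what the paper does.
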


This rate matches, up to log factors, the rates in upper bounds which
have previously been proved, for example for the zooming algorithm of
Kleinberg, Slivkins and Upfal \cite{kleinberg_multi-armed_2008}, or the HOO algorithm of
Bubeck \textit{et al.} \cite{bubeck_x-armed_2011}. In the
following, we will show that it
also matches upper bounds for the adaptive algorithm described in
this paper.

We begin by showing that, up to log factors, Algorithm~\ref{alg:atb}
achieves the same rates, given only knowledge of the quality
$\gamma$. We note that a similar result was stated by
Slivkins \cite{slivkins_multi-armed_2011}, in the case of a single finite
tree. In the following, we use a novel argument to fix a gap in the
argument of Slivkins,\footnote{The proof
of Slivkins' Lemma~4.4(b) incorrectly
assumes that all deactivated boxes have been selected.} and also
extend the result to multiple, infinite trees $\mathcal T_i$.

\begin{theorem}
\label{thm:regret}
Fix $\varepsilon, \gamma\in(0,1)$. Running Algorithm~\ref{alg:atb} with
error rate $\varepsilon$ and quality $\gamma$, for any $\beta\ge
0$ and $\kappa, \lambda> 0$, we have events $E_T$, of
probability at least $1-\varepsilon$ under any $P \in\mathcal P$,
on which
\[
R_T \le Tr_T, \qquad S_T \le
r_T,
\]
for a rate
\[
r_T \lesssim \biggl(\frac{
T}{\gamma^{-1}\log(\gamma^{-1})\log(\varepsilon^{-1} + T)\log
(T)^{1(\beta= 0)}} \biggr)^{-1/(\beta+2)},
\]
uniformly in $\gamma$ and $\varepsilon$.
\end{theorem}

We have thus shown that Algorithm~\ref{alg:atb} achieves good rates of
regret, without detailed knowledge of the reward function $\mu$.
Furthermore, the algorithm adapts to the shape of $\mu$ not only
within a single tree $\mathcal T_i$, but also by combining the trees
in whichever way minimises the zooming dimension $\beta$.

In the above theorem, Algorithm~\ref{alg:atb} still required a bound
$\gamma$ on the quality of $\mu$. As a corollary, however, we can
achieve similar rates of regret, up to say an additional log factor,
without prior knowledge of $\mu$.

\begin{corollary}
\label{cor:doubling}
Fix $\varepsilon\in(0, 1)$. Running Algorithm~\ref{alg:atb} with error
rate $\varepsilon$ and quality $\log(T)^{-1}$, for any $\beta\ge
0$, $\kappa, \lambda> 0$ and $\gamma\in(0, 1)$, we have
events $E_T$, of probability at least $1-\varepsilon$ under any $P
\in\mathcal P$, on which
\[
R_T \le Tr_T, \qquad S_T \le
r_T,
\]
for a rate
\[
r_T \lesssim \biggl(\frac{T}{\log(\varepsilon^{-1} +
T)\log(T)^{1+1(\beta= 0)}\log(\log(T))} \biggr)^{-1/(\beta+2)},
\]
uniformly in $\varepsilon$.
\end{corollary}

We note that in the above construction, Algorithm~\ref{alg:atb} is no longer
an anytime algorithm, as its quality parameter depends on the time
horizon $T$. If an anytime algorithm is desired, one can be
constructed using the doubling trick, as in
Slivkins \cite{slivkins_multi-armed_2011}; however, we need not consider this
further.

We have thus shown that Algorithm~\ref{alg:atb} can achieve near-optimal
rates of regret, for the optimal combination of trees $\mathcal
T_i$, without prior knowledge of $\mu$. We note that, together with
Theorem~\ref{thm:zooming}, we can use this result to deduce the first part
of Corollary~\ref{cor:zoom-rate}, our result establishing good rates of
regret in continuum-armed bandits.

It remains to discuss the implementation of our algorithm; we will
show that, for a careful implementation, it can run in almost linear
time. The key idea is to store the active boxes $B$ in a priority
queue, with priority given by their index $I_t(B)$. The operation of
choosing a box $B_t$ with maximal index can then be performed in
constant time.

The remaining work lies in efficiently maintaining the set $\mathcal
A_t$ of active boxes, and their indices $I_t$. We note that for
active boxes $B$, the index $I_t(B)$, width estimate $W_t(B)$,
and confidence radius $r_t(B)$ are changed only when we choose an
arm $x_t \in B$. We thus need ensure only that these quantities can
be updated efficiently when given a new data point.

To do so, we will keep some preliminary computations stored in
memory. For each active box $B \in\mathcal A_t$, we store a list of
the past data points $(x_s, Y_s)$, $s \le t$, for which $x_s \in
B$. For each box $C \subseteq B$ satisfying $\pi(C \mid B) \ge
\gamma$, we further store the number of hits $n_t(C)$, and average
reward $\mu_t(C)$. After choosing an arm $x_t \in B$, we update
these stored quantities to account for the new data point, and
recompute the dependent quantities $I_t(B)$, $W_t(B)$ and
$r_t(B)$.

In the event that we change the active set $\mathcal A_t$, any
newly-stored quantities can be computed directly from the past data
points $(x_s, Y_s)$, $s \le t$. With this procedure, we can then
show that our algorithm runs in almost linear time.

\begin{theorem}
\label{thm:complexity}
On the event $E_T$, the computational complexity of
Algorithm~\ref{alg:atb} is:
\begin{enumerate}[(ii)]
\item in the setting of
Theorem~\ref{thm:regret},
\[
\mathrm{O} \bigl(\gamma^{-(1+\log_2(p))}T \log(T) \bigr),
\]
uniformly in $\gamma$ and $\varepsilon$; and
\item in the setting of
Corollary~\ref{cor:doubling},
\[
\mathrm{O} \bigl(T \log(T)^{2 + \log_2(p)} \bigr),
\]
uniformly in $\varepsilon$.
\end{enumerate}
\end{theorem}

Finally, we note that together with Theorem~\ref{thm:zooming}, we can then
deduce the second part of Corollary~\ref{cor:zoom-rate}, our result
establishing computational efficiency in continuum-armed bandits.


\section*{Acknowledgements}
We would like to thank Richard Nickl, Alexandra Carpentier, and the
anonymous referees for their valuable comments and suggestions, and
EPSRC for their support under Grant EP/K000993/1.

\begin{supplement}
\stitle{Supplement to ``Adaptive-treed bandits''.}
\slink[doi]{10.3150/14-BEJ644SUPP} 
\sdatatype{.pdf}
\sfilename{BEJ644\_supp.pdf}
\sdescription{We provide proofs of our results.}
\end{supplement}

%

\printhistory
\end{document}